\definecolor{darkgreen}{rgb}{0,0.45,0} 
\definecolor{orange}{rgb}{1.0,0.45,0.45} 
\newtheorem{lemma}{Lemma}[section]
\newtheorem{proposition}[lemma]{Proposition}
\newtheorem{corollary}[lemma]{Corollary}
\newtheorem{theorem}[lemma]{Theorem}
\newtheorem{examples}[lemma]{Examples}
\newtheorem{remark}[lemma]{Remark}
\newcommand{\X}{\mathbb{X}}
\newcommand{\C}{\mathbb{C}}
\newcommand{\D}{\mathbb{Y}}
\newcommand{\I}{\mathbb{I}}
\newcommand{\SplExt}{\textnormal{SplExt}}
\newcommand{\aru}{\ar@<0.5ex>}
\newcommand{\ard}{\ar@<-0.5ex>}
\newcommand{\two}{\mathbf{2}}
\begin{document}
\title{Action representability of the category of internal groupoids}
\author{Marino Gran and James Richard Andrew Gray}
\maketitle
\begin{abstract}
 When $\mathbb C$ is a semi-abelian category, it is well known that the category $\mathsf{Grpd}(\mathbb C)$ of internal groupoids in $\mathbb C$ is again semi-abelian. The problem of determining whether the same kind of phenomenon occurs when the property of being semi-abelian is replaced by the one of being action representable (in the sense of Borceux, Janelidze and Kelly) turns out to be rather subtle. In the present article we give a sufficient condition for this to be true: in fact we prove that the category $\mathsf{Grpd}(\mathbb C)$ is a semi-abelian action representable algebraically coherent category with normalizers if and only if $\mathbb C$ is a semi-abelian action representable algebraically coherent category with normalizers. This result applies in particular to the categories of internal groupoids in the categories of groups, Lie algebras and cocommutative Hopf algebras, for instance.
\end{abstract}

\section{Preliminaries}\label{prelims}
In this paper $\mathbb C$ will always denote a semi-abelian category (in the
sense of Janelidze, M\'arki and Tholen \cite{JMT}), usually satisfying some
additional axioms. Recall that a category $\mathbb C$ is \emph{semi-abelian}
if it is
\begin{itemize}
\item finitely complete, finitely cocomplete and pointed, with zero object $0$;
\item (Barr)-\emph{exact} \cite{Barr};
\item (Bourn)-\emph{protomodular} \cite{Bourn}, which in the pointed case can be expressed by the validity of the \emph{Split Short Five Lemma} in $\mathbb C$.
\end{itemize}
There are plenty of interesting algebraic categories which are semi-abelian. For example, any variety of algebras whose algebraic theory has among its operations and identities those of the theory of groups is semi-abelian (see \cite{BJ1} for a precise characterization). As a consequence, the categories $\mathsf{Grp}$ of groups, $\mathsf{Ab}$ of abelian groups, $\mathsf{Rng}$ of (not necessarily unitary) rings, $\mathsf{Lie}_R$ of Lie algebras over a commutative ring $R$, $\mathsf{XMod}$ of crossed modules (of groups), are all semi-abelian categories. In addition any category of compact Hausdorff models of a semi-abelian algebraic theory \cite{BC}, such as the category $\mathsf{Grp}( \mathsf{Comp})$ of compact Hausdorff groups, is semi-abelian, as is also the category $\mathsf{Hopf}_{K,\text{coc}}$ of cocommutative Hopf algebras over a field $K$ \cite{Hopf}. The dual category $\mathsf{Set}_{*}^{\text{op}}$ of the category $\mathsf{Set}_{*}$ of pointed sets is semi-abelian \cite{JMT}. Other examples can be derived from the fact that the category of internal groupoids in a semi-abelian category is semi-abelian \cite{BG2}.

In any semi-abelian category $\C$ there is a natural notion of centrality of arrows \cite{Huq, BG} (in fact weaker assumptions on the base category can be required \cite{BB}, but in this work we shall always ask $\mathbb C$ to be at least semi-abelian). Given two morphisms $f \colon A \rightarrow B$ and $g \colon C \rightarrow B$ with the same codomain, they are said to \emph{commute} in the sense of Huq \cite{Huq} if there is a (necessarily unique) arrow $c \colon A \times C \rightarrow B$ making the diagram commute
\[ \xymatrix{A \ar[r]^-{(1_A,0)} \ar[dr]_f & A \times C \ar@{.>}[d]^{c} &C \ar[l]_-{(0, 1_C)} \ar[dl]^{g} \\
& B,& }
\]
where $(1_A,0)$ and $(0, 1_C)$ are the unique morphisms induced by the universal property of the product $A \times C$.
When this is the case the unique arrow $c \colon A \times C \rightarrow B$ is called the \emph{cooperator} of $f$ and $g$. One usually writes $[f,g]_{\mathrm{Huq}}= 0$, or simply $[f,g] = 0$, when this is the case. Given two subobjects $f \colon A \rightarrow B$ and $g \colon C \rightarrow B$ with the same codomain, the \emph{Huq commutator} $[f,g]$, usually denoted by $[A,C]$ (if there is no risk of confusion), is the smallest normal subobject $D$ of $B$ with the following universal property: in the quotient $\pi \colon B \rightarrow \frac{B}{D}$ the regular images $\pi (A) $ and $\pi (C)$ (of $f \colon A \rightarrow B$ and $g \colon C \rightarrow B$ along $\pi$) commute in the sense above.

Given a morphism $f \colon A\to B$ in a semi-abelian category $\C$ we will denote by $z_f: Z_B(A,f)\to B$ the centralizer of $f$ in $B$, i.e. the terminal object in the category of morphisms that commute with $f$, whenever it exists (see e.g. \cite{BJ, Gray-1}). In this case $z_f$ is always a monomorphism, and we write $Z_B(A,f)$ or $Z_B(A)$ (when there is no risk of confusion) for the corresponding subobject of $B$. For a monomorphism $f: A\to B$ the normalizer of $f$ is the terminal object in the category with objects triples $(N,n,m)$ where $n$ is a normal monomorphism and $m$ a monomorphism such that $mn=f$ \cite{Gray0}.

Recall that a split extension is a diagram in $\C$
\begin{equation}\label{spl-ext}
\xymatrix{
X \ar[r]^{\kappa} & A \aru[r]^{\alpha} & B \aru[l]^{\beta}
}
\end{equation}
where $\kappa$ is the kernel of $\alpha$ and $\alpha \beta = 1_B$. A morphism of split extensions is a diagram in $\C$
\[
\xymatrix{
X\ar[r]^{\kappa}\ar[d]_{u} & A \aru[r]^{\alpha}\ar[d]^{v} & B\aru[l]^{\beta}\ar[d]^{w}\\
X'\ar[r]^{\kappa'} & A' \aru[r]^{\alpha'} & B'\aru[l]^{\beta'}
}
\]
where the top and bottom rows are split extensions (the domain and codomain, respectively), and $v\kappa=\kappa'u$, $v\beta=\beta' w$ and $w\alpha=\alpha'v$. Let us write $\SplExt(\C)$ for the category of split extensions in $\C$, and write $P, K : \SplExt(\C)\to \C$ for the functors sending a split extension to its \emph{codomain} and to the (object part of the) \emph{kernel}, respectively. The category $\C$ can be equivalently defined to be action representable, in the sense of Borceux, Janelidze, Kelly \cite{BJK} when each fiber of the functor $K$ has a terminal object. This means that for each $X$ in $\C$ there exists a split extension
\[
 \xymatrix{
  X \ar[r]^-{k} & [X]\ltimes X \aru[r]^-{p_1} & [X],\aru[l]^-{i}
 }
\]
called the \emph{generic split extension with kernel $X$},
with the universal property that there exists a unique morphism to it from each split extension in the fiber $K^{-1}(X)$, that is, there is a unique morphism, which is the identity on kernels, to it from each split extension \eqref{spl-ext} with kernel $X$:
\[
\xymatrix{
X\ar[r]^{\kappa}\ar@{=}[d] & A \aru[r]^{\alpha}\ar@{.>}[d]^{} & B \aru[l]^{\beta}\ar@{.>}[d]^{}\\
X\ar[r]^-{k} & [X]\ltimes X  \aru[r]^-{p_1} & [X]. \aru[l]^-{i}
}
\]
For instance, in the category $\mathsf{Grp}$ of groups, the generic split extension with kernel a group $X$ is given by the split extension 
\[
 \xymatrix{
  X \ar[r]^-{k} & \mathsf{Aut}(X) \ltimes X \aru[r]^-{p_1} &  \mathsf{Aut}(X),\aru[l]^-{i}
 }
\]
where $\mathsf{Aut}(X)$ is the group of automorphisms of $X$ and the action of $\mathsf{Aut}(X)$ on the group $X$ is given by the evaluation.

More generally, for $X$ an object in $\C$, a split extension in $K^{-1}(X)$ is called \emph{faithful} if there is at most one morphism to it from each split extension in $K^{-1}(X)$. The category $\C$ is called \emph{action accessible} \cite{BJ} if
for each $X$ in $\C$ each split extension in $K^{-1}(X)$ admits a morphism to 
a faithful split extension in $K^{-1}(X)$.
The category $\C$ is \emph{algebraically coherent} \cite{Coherent} when the change of base functors of fibers of $P$ preserve joins. This is equivalent (in the pointed protomodular context) to requiring that for each cospan of monomorphisms of split extensions
\[
\xymatrix{
 X_1\ar[r]^{\kappa_1}\ar[d]_{u_1} & A_1 \aru[r]^{\alpha_1}\ar[d]^{v_1} & B\aru[l]^{\beta_1}\ar@{=}[d]\\
X\ar[r]^{\kappa} & A \aru[r]^{\alpha} & B\aru[l]^{\beta}\\
X_2\ar[r]^{\kappa_2}\ar[u]^{u_2} & A_2 \aru[r]^{\alpha_2}\ar[u]_{v_2} & B\aru[l]^{\beta_2}\ar@{=}[u]\\
}
\]
if the morphisms $v_1$ and $v_2$ are jointly strongly epimorphic in $\C$, then so are the morphisms $u_1$ and $u_2$.
Recall that a semi-abelian algebraically coherent category $\C$ with normalizers has the following properties:
\begin{itemize}
 \item $\C$ is action accessible \cite{BJ} (see also \cite{Gray}), and hence centralizers of normal monomorphisms exist and are normal (Proposition 5.2 of \cite{BournGray2}).
\item Huq commutators distribute over joins of subobjects \cite{Gray2}:
given three subobjects $A_1 \rightarrow C$, $A_2 \rightarrow C$ and $B \rightarrow C$ of the same object $C$ the Huq commutator satisfies the following identity:
$$[A_1 \vee A_2, B] = [A_1 ,B] \vee [ A_2, B].$$
\item The Jacobi identity holds for normal subobjects (Theorem 7.1 \cite{Coherent}): if $K,L,M$ are normal subobject of an object $C$, then
\begin{equation}\label{Jacobi}
[K,[L,M]] \leq [[K,L],M]\vee [[M,K],L]. 
\end{equation}
\item a split extension in $\C$
\[
\xymatrix{
X \ar[r]^{\kappa} & A \aru[r]^{\alpha} & B\aru[l]^{\beta}
}
\]
is faithful if and only if $Z_A(X,\kappa)\wedge B=0$ (see \cite{Bourn2} Corollary 4.1).
\end{itemize}

\section{Reflexive graphs and groupoids}

Recall that a reflexive graph in $\C$ is a diagram
\begin{equation}\label{graph}
      \xymatrix {
    G \ar@<6pt>[r]^{\sigma} \ar@<-6pt>[r]_{\tau} & G_0 \ar[l]|-{e}}
\end{equation}
in $\C$ such that $\sigma e = 1_{G_0} = \tau e$. 
Equivalently, when $\C$ has equalizers, a reflexive graph can be defined as a triple
$(G,s:G\to G,t:G\to G)$ where $st=t$ and $ts=s$. Indeed, the second form is obtained from
the first by setting $s=e\sigma$ and $t=e\tau$. On the other hand the first form
is obtained from the second by choosing $e: G_0 \to G$ to be the equalizer of
$s$ and $1_G$ and constructing $\sigma$ and $\tau$ via the universal property of this
equalizer.

When $\C$ is a semi-abelian action accessible category, an internal groupoid in $\C$ can be equivalently presented as a triple $(G, s \colon G \rightarrow G,  t \colon G \rightarrow G)$ with $st=t$ and $ts=s$ such that, moreover, the commutator of the kernels of $s$ and $t$ is trivial: \begin{equation}\label{commut}
[ \ker (s), \ker (t)] = 0.\end{equation}
This follows from the results in \cite{CPP, BJ} (the Smith commutator and the Huq commutator coincide in this context) and from the fact that 
 $$[\ker (\sigma), \ker (\tau)]= [ \ker (s), \ker (t)]  = 0,$$ 
 since $e$ is a monomorphism. 
  
The category of groupoids in $\C$ is then equivalent to the category whose objects are triples $(G, s \colon G \rightarrow G,  t \colon G \rightarrow G)$ as above with $[\ker (s), \ker (t)]  = 0$, and arrows $$f \colon (G, s \colon G \rightarrow G,  t \colon G \rightarrow G) \rightarrow (H, s' \colon H \rightarrow H,  t' \colon H \rightarrow H)$$ those $f \colon G \rightarrow H$ in $\C$ such that $s'f = fs$ and $t'f = ft$. Observe that this alternative presentation of the notion of internal groupoid can be seen as a generalization of the notion of \emph{$1$-cat group} (in the sense of \cite{Loday}) to the semi-abelian context.

With a slight abuse of notation, since $\C$ will always be assumed to be semi-abelian, from now on we shall write $\mathsf{Grpd}(\C)$ for this latter equivalent category, and also call its objects internal groupoids. We shall also denote by $\ker(s): {}_*G\to G$ and $\ker(t):G_*\to G$, the kernel of $s:G\to G$ and $t:G\to G$, respectively. The notation ${}_*G$ for the domain of the kernel of $s$ intuitively reminds one of the fact that its ``elements'' are the (internal) arrows of $G$ whose ``source'' is the ``zero element in $G$'', whereas the arrows in $G_*$ have as ``target'' this same ``zero element''. We shall also simply write $[{}_*G, G_*]$ to denote the commutator $[\ker(s), \ker (t)]$.

 The remaining part of this section consists largely of a series of lemmas building up to our main
 results: Theorems \ref{thm:min} and \ref{thm:main}, and Corollary \ref{cor:main}. 
Recall that in a pointed protomodular category for a split extension \eqref{spl-ext}
the object $A$ is the join of $X$
and $B$ in $A$. Indeed, if $S$ is a subobject of $A$ containing $X$ and $B$,
then there are monomorphisms $u: X\to S$, $v:B\to S$ and $m: S\to A$ such that
$\kappa =mu$ and $\beta=mv$. This easily implies that $u$ is the 
kernel of $\alpha m$, which is a split epimorphism with section $v$. The split short
five lemma now implies that $m$ is an isomorphism and hence $A\leq S$.
Let us also recall the following known result (see Lemma 2.6 in \cite{CGVdL}):
 \begin{lemma}\label{lift}
 Let $\C$ be a semi-abelian category.
 Consider a split extension as in the bottom row of the diagram 
 \[
\xymatrix{
K\ar@{.>}[r]^{}\ar[d]_k& K \vee Z \aru@{.>}[r]^{}\ar@{.>}[d]^{} & Z \aru@{.>}[l]^{}\ar@{=}[d]^{}\\
X\ar[r]_-{x} & Y  \aru[r]^-{f} & Z \aru[l]^-{s}
}
\]
  in $\C$, with the property that $x k \colon K \rightarrow Y$ is a normal monomorphism. Then this split extension lifts along $k \colon K \rightarrow X$ to yield a normal monomorphism of split extensions, where $K \vee Z$ is the join of the subobjects $K$ and $Z$ of $Y$.
 \end{lemma}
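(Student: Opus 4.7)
The plan is to construct the middle object $P$ of the lifted split extension as a pullback built from the cokernel of the normal monomorphism $xk$. Let $q\colon Y\to Q$ denote this cokernel. The first step is to verify that the composite $qs\colon Z\to Q$ is a monomorphism: if $qsu=0$, then $su$ factors through $\ker q=xk$, say $su=xku'$, and composing with $f$ yields $u=fsu=fxku'=0$, so $qs$ has zero kernel and is therefore a monomorphism in the pointed protomodular category $\C$.

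Next I would form the pullback $P$ of $q$ along $qs$, with projections $\pi_1\colon P\to Z$ and $\pi_2\colon P\to Y$. The morphism $s\colon Z\to Y$ satisfies $q\cdot s=qs\cdot 1_Z$ and hence induces a unique section $\tilde s\colon Z\to P$ of $\pi_1$ with $\pi_2\tilde s=s$. A standard pullback-cancellation argument shows that $\ker\pi_1\cong\ker q\cong K$, the associated comparison map into $Y$ being precisely $xk$. This delivers a split extension $K\to P\rightleftarrows Z$ together with a morphism of split extensions $(k,\pi_2,1_Z)$ into the bottom row.

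To identify $P$ with $K\vee Z$ as a subobject of $Y$, I would note that $\pi_2$ is a pullback of the monomorphism $qs$, and hence itself a monomorphism. Applying the pointed-protomodular join property recalled just before the statement of the lemma, $P$ coincides with the join $\ker\pi_1\vee\tilde s(Z)$ taken inside $P$; transporting along $\pi_2$ identifies this with $K\vee Z$ inside $Y$, giving the diagram shape stated in the lemma. The induced map on kernels $k\colon K\to X$ is moreover a normal monomorphism in $\C$, being realised as $\ker(qx)$ via a short diagram chase that uses $\ker q=xk$ together with $x$ being a monomorphism.

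The main obstacle is the initial step, namely showing that $qs$ is a monomorphism, since once that is in place the universal property of the pullback delivers every remaining ingredient essentially for free. The role of the hypothesis that $xk$ be normal is precisely to produce the quotient $q\colon Y\to Q$, whose pullback along $qs$ realises the desired restricted split extension.
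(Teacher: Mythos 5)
The paper does not actually prove this lemma---it is recalled as a known result with a citation to Lemma~2.6 of \cite{CGVdL}---so there is no internal proof to compare against; judged on its own, your argument is correct and is the natural one. Two small points. First, for $(k,\pi_2,1_Z)$ to be a morphism of split extensions you also need the compatibility $f\pi_2=\pi_1$, which you never verify; it follows at once from protomodularity, since $\ker\pi_1$ and $\tilde s$ are jointly (strongly) epimorphic on $P$ and the two composites agree on each of them ($f\pi_2$ and $\pi_1$ both vanish on $K$ and both restrict to $1_Z$ along $\tilde s$). Second, your construction actually yields slightly more than you claim at the end: besides $k=\ker(qx)$ being normal in $X$, the subobject $P=q^{-1}(qs(Z))$ is by its very construction the kernel, computed in the fibre of split extensions over $Z$ (equivalently in $\mathsf{Pt}_Z(\C)$), of the induced quotient morphism to the split extension $X/K\to Y/K\rightleftarrows Z$; this is precisely the sense in which the inclusion is a \emph{normal} monomorphism of split extensions, and it is worth making explicit since the middle map $K\vee Z\to Y$ need not be normal in $\C$ itself (take $K=0$ in $\mathsf{Grp}$). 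With these two remarks added, the proof is complete.
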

\begin{lemma}\label{lemma: largest sub-reflexive-graph with partial composition}
Let $\C$ be a semi-abelian action accessible category.
For each split extension 
\[
\xymatrix{
X \ar[r]^{\kappa} & A \aru[r]^{\alpha} & B \aru[l]^{\beta}
}
\]
of internal reflexive graphs, there exists a largest sub-reflexive-graph $\tilde B$ of $B$ such that $[{}_*\tilde B,X_*]=0=[\tilde B_*,{}_*X]$ in $\C$.
\end{lemma}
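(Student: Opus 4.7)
The plan is to construct $\tilde B$ explicitly as a meet of two equalizer-type subobjects of $B$, after identifying the right pair of normal subobjects via centralizers in $A$.

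First I would observe that $X_*$ and ${}_*X$ are normal in $A$: since $X = \ker(\alpha)$, $A_* = \ker(t_A)$ and ${}_*A = \ker(s_A)$ are all normal subobjects of $A$, the intersections $X_* = X \wedge A_*$ and ${}_*X = X \wedge {}_*A$ are again normal in $A$. Action accessibility then produces normal centralizers $Z_A(X_*)$ and $Z_A({}_*X)$ in $A$. Pulling these back along $\beta \colon B \hookrightarrow A$ yields normal subobjects $P_1 := B \wedge Z_A(X_*)$ and $P_2 := B \wedge Z_A({}_*X)$ of $B$, with associated quotients $q_i \colon B \to B/P_i$ in $\C$. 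I would then set
\[
\tilde B \; := \; \mathrm{eq}(q_1, q_1 s) \wedge \mathrm{eq}(q_2, q_2 t) \; \leq \; B.
\]

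That $\tilde B$ is a sub-reflexive-graph is formal: using $s^2 = s$, $t^2 = t$, $st = t$ and $ts = s$, the morphisms $s \iota_{\tilde B}$ and $t \iota_{\tilde B}$ each equalize both defining pairs and hence factor through $\tilde B$. The commutator conditions follow because the inclusion $j \colon {}_*\tilde B = \tilde B \wedge {}_*B \hookrightarrow B$ satisfies $q_1 j = q_1 s j = 0$ (the first equality from the equalizer property restricted to $\tilde B$, the second because $s|_{{}_*B} = 0$); hence ${}_*\tilde B \leq \ker(q_1) = P_1 \leq Z_A(X_*)$, and monotonicity of the Huq commutator gives $[{}_*\tilde B, X_*] = 0$, with the symmetric argument for $[\tilde B_*, {}_*X] = 0$.

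The main step, and the expected main obstacle, is maximality. Let $C \leq B$ be any sub-reflexive-graph with $[{}_*C, X_*] = 0 = [C_*, {}_*X]$. The universal property of the centralizer yields ${}_*C \leq P_1$ and $C_* \leq P_2$. To establish $C \leq \tilde B$ I must produce the factorization $C \leq \mathrm{eq}(q_1, q_1 s)$: here the idempotent $s|_C \colon C \to C$ has kernel ${}_*C$ and image $s(C)$, on which $s$ acts as the identity, yielding a split extension ${}_*C \hookrightarrow C \twoheadrightarrow s(C)$ in $\C$. By the protomodular join property recalled in the excerpt, ${}_*C$ and $s(C)$ are jointly strongly epimorphic subobjects of $C$. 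Now $q_1 \iota_C$ and $q_1 s \iota_C$ both vanish on ${}_*C$ (the first because ${}_*C \leq P_1 = \ker q_1$, the second because $s$ kills ${}_*B$) and coincide on $s(C)$ (because $s$ fixes $s(C)$), so joint epimorphicity forces them to agree on $C$. The $t$-equalizer is treated symmetrically. This last step, in which action accessibility (furnishing the centralizers), the idempotency of $s$ and $t$, and pointed protomodularity (providing the split-extension decomposition of $C$) are woven together, carries the genuine content of the argument.
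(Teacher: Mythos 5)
Your proof is correct, and it identifies the same subobject as the paper does, using the same essential inputs: the existence and normality of the centralizers $Z_A(X_*)$ and $Z_A({}_*X)$ of the normal subobjects $X_*=X\wedge A_*$ and ${}_*X=X\wedge {}_*A$ (guaranteed by action accessibility), and, for maximality, the protomodular decomposition of an arbitrary competitor as the join of its $s$- (resp.\ $t$-) kernel with its object of objects. The packaging differs: the paper defines $\tilde B=((Z_A({}_*X)\wedge B_*)\vee B_0)\wedge((Z_A(X_*)\wedge {}_*B)\vee B_0)$, gets the sub-reflexive-graph property for free from the observation that any subobject containing $B_0$ is automatically a sub-reflexive-graph, and verifies $\tilde B_*\leq Z_A({}_*X)$ by applying the lifting lemma (Lemma~\ref{lift}) to compute $(Z_1\vee B_0)_*=Z_1$ for $Z_1=Z_A({}_*X)\wedge B_*$ normal in $B$. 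Your equalizer description $\mathrm{eq}(q_1,q_1s)\wedge \mathrm{eq}(q_2,q_2t)$ cuts out the same subobject, but trades that lemma for a purely equational verification of closure under $s$ and $t$ (from $s^2=s$, $ts=s$, $st=t$) and an immediate kernel computation ${}_*\tilde B\leq\ker q_1=P_1\leq Z_A(X_*)$; the price is that you invoke the jointly-strongly-epimorphic decomposition explicitly in the maximality step, which the paper also does (as $B'=B'_*\vee B'_0$). Both arguments are complete; yours bypasses Lemma~\ref{lift} at no extra expense.
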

\begin{proof}
We will show that
\[\tilde B = ((Z_A({}_*X,\kappa \ker(s)) \wedge B_*) \vee B_0) \wedge ((Z_A(X_*,\kappa \ker(t)) \wedge  {}_*B) \vee B_0)\]
is the largest sub-reflexive-graph of $B$ satisfying the desired property.
 Since any subobject of $B$ containing $B_0$ (the ``object of objects'' of the reflexive graph $B$) is a sub-reflexive graph of $B$,
 we know that $\tilde B$ is a sub-reflexive-graph of $B$.
 To see that it satisfies the desired property let $Z_1= Z_A({}_*X,\kappa \ker(s))\wedge B_*$, and note that, since ${}_*X=X\wedge {}_*A$, it follows that ${}_*X$ is a normal subobject of $A$ and hence so is $Z_A({}_*X,\kappa \ker(s))$. Accordingly, $Z_1$ is a normal subobject of $B$, and this implies that $(Z_1\vee B_0)_*=Z_1$ - just note that we are in the situation of Lemma \ref{lift}, with $Y=B$, $X= B_*$ and $Z=B_0$. From this we then deduce that
 \[\tilde B_* \leq (Z_1\vee B_0)_* = Z_1 \leq Z_A({}_*X,\kappa\ker(s)).\]
A similar argument shows that ${}_*\tilde B \leq Z_A(X_*,\kappa \ker(t))$. Now, let $B'$ be a sub-reflexive-graph of $B$ satisfying the desired property. Clearly $B'_* \leq B_*$ and $B'_* \leq Z_A({}_*X,\kappa \ker(s))$, and hence $$B'_* \leq Z_A({}_*X,\kappa \ker(s)) \wedge B_*.$$ Since $B'_0 \leq B_0$ and $B' = B'_* \vee B'_0$ (by protomodularity), it follows that $B' = B'_* \vee B'_0 \leq (Z_A({}_*X,\kappa \ker(s)) \wedge B_*) \vee B_0$. By exchanging the roles of $s$ and $t$ of the internal reflexive graphs, we have that $$B' \leq (Z_A(X_*, \kappa \ker(t)) \wedge {}_*B) \vee B_0,$$ from which the claim easily follows.
\end{proof}
\begin{lemma}\label{lemma: commutator commutes with kernel}
Let $\C$ be a semi-abelian action accessible algebraically coherent category.
For each split extension 
\[
\xymatrix{
X \ar[r]^{\kappa} & A \aru[r]^{\alpha} & B \aru[l]^{\beta}
}
\]
of internal reflexive graphs, if $X$ is a groupoid and $$[{}_* B,X_*]=0=[ B_*,{}_*X],$$ then $$[X,[A_*,{}_*A]]=0.$$
\end{lemma}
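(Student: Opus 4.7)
The plan is to derive the conclusion by a direct application of the Jacobi identity \eqref{Jacobi} to the triple $(X, A_*, {}_*A)$ of normal subobjects of $A$, after decomposing $A_*$ and ${}_*A$ along the split extension and then eliminating each resulting term using distributivity of Huq commutators over joins.

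First I would establish the decompositions $A_* = X_* \vee B_*$ and ${}_*A = {}_*X \vee {}_*B$ as subobjects of $A$. Since $\kappa$, $\alpha$ and $\beta$ are morphisms of reflexive graphs, they restrict to kernels of $t$ (respectively of $s$) to give the two split extensions
\[ X_* \to A_* \rightleftarrows B_*, \qquad {}_*X \to {}_*A \rightleftarrows {}_*B \]
in $\C$. The pointed protomodular join formula recalled before Lemma \ref{lift} then yields the required decompositions. Note that $X$, $A_*$ and ${}_*A$ are all normal in $A$ (as kernels of $\alpha$, $t_A$ and $s_A$), and hence the meets $X_* = X \wedge A_*$ and ${}_*X = X \wedge {}_*A$ are normal in $A$ as well.

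Next I apply \eqref{Jacobi} with $K = X$, $L = A_*$, $M = {}_*A$, giving
\[ [X, [A_*, {}_*A]] \leq [[X, A_*], {}_*A] \vee [[{}_*A, X], A_*]. \]
Since the Huq commutator of two normal subobjects of $A$ is contained in their meet, $[X, A_*] \leq X \wedge A_* = X_*$, so by distributivity of Huq commutators over joins
\[ [[X, A_*], {}_*A] \leq [X_*, {}_*X \vee {}_*B] = [X_*, {}_*X] \vee [X_*, {}_*B] = 0, \]
where $[X_*, {}_*X] = 0$ is the groupoid condition on $X$ and $[X_*, {}_*B] = [{}_*B, X_*] = 0$ is a hypothesis. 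A symmetric argument, starting from $[{}_*A, X] \leq {}_*X$ and using $[B_*, {}_*X] = 0$, shows that $[[{}_*A, X], A_*] = 0$ as well. Combining these bounds with the Jacobi identity yields the desired $[X, [A_*, {}_*A]] = 0$.

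The main substantive step is the initial identification of $A_*$ and ${}_*A$ as joins of the corresponding source/target kernels of $X$ and $B$ inside $A$; once this decomposition is in place, the rest of the argument is a transparent combination of Jacobi, distributivity over joins, and the standard containment of the commutator of two normal subobjects in their meet.
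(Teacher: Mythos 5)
Your proof is correct and follows essentially the same route as the paper's: decompose $A_*$ and ${}_*A$ as joins via protomodularity, bound $[X,A_*]$ by $X_*$ (and dually), kill the resulting terms using distributivity over joins together with the groupoid condition and the hypotheses, and finish with the Jacobi identity. The only cosmetic difference is that you obtain $[X,A_*]\leq X\wedge A_*=X_*$ directly from normality of both subobjects, whereas the paper first expands $[X,A_*]=[X,X_*]\vee[X,B_*]$ and bounds each piece; both are valid.
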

\begin{proof}
Note that since $\C$ is protomodular we have $A_*=X_*\vee B_*$ and ${}_*A={}_*X\vee{}_*B$. Note that trivially $[X,B_*] \leq X$, but also that $[X,B_*] \leq A_*$ meaning that $[X,B_*] \leq X \wedge A_* = X_*$. We have
\begin{align*}
[X,A_*] &= [X,X_*]\vee [X,B_*]\\
& \leq X_* \vee X_*= X_*.
\end{align*}
Therefore
\begin{align*}
[{}_*A, [X,A_*]] &\leq [{}_*A,X_*]\\
&= [{}_*X,X_*] \vee [{}_*B,X_*]\\
&= 0,
\end{align*}
where the last equality follows from the equality $[{}_*X,X_*]=0$ (since $X$ is a groupoid) and the assumption $[{}_* B,X_*]=0$.

This and its dual (i.e. swapping $s$ and $t$) mean that $$[A_*,[X,{}_*A]] =0=[{}_*A,[X,A_*]].$$ The Jacobi identity \eqref{Jacobi} now implies that 
\[
[X,[A_*,{}_*A]]\leq [[X,A_*],{}_*A]\vee[A_*,[X,{}_*A]]=0.
\]
\end{proof}
\begin{lemma}
\label{lemma: groupoids are extension closed}
Let $\C$ be a semi-abelian action accessible algebraically coherent category.
For each split extension
\[
\xymatrix{
X \ar[r]^{\kappa} & A \aru[r]^{\alpha} & B \aru[l]^{\beta}
}
\]
of internal reflexive graphs. If $X$ and $B$ are groupoids and $$[{}_* B,X_*]=0=[B_*,{}_*X],$$ then $A$ is a groupoid.
\end{lemma}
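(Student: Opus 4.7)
The plan is to verify the internal-groupoid condition $[{}_*A, A_*] = 0$ for $A$ by decomposing both kernels as joins coming from the split extension and then distributing the commutator.

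First I would observe that applying the kernel-of-$s$ functor to the given split extension of reflexive graphs produces a split extension ${}_*X \to {}_*A \to {}_*B$ in $\C$: since $\alpha$ and $\beta$ are morphisms of reflexive graphs they restrict along the inclusions ${}_*A \hookrightarrow A$ and ${}_*B \hookrightarrow B$, and one checks directly that the kernel of the restriction of $\alpha$ is precisely ${}_*X$. The protomodularity remark recalled at the start of this section then yields ${}_*A = {}_*X \vee {}_*B$ as subobjects of ${}_*A$, and therefore also as subobjects of $A$. Dually $A_* = X_* \vee B_*$ as subobjects of $A$.

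Next, applying the distributivity of the Huq commutator over joins (which holds in algebraically coherent semi-abelian categories, and was already used in the proof of Lemma \ref{lemma: commutator commutes with kernel}) twice, I obtain
\begin{align*}
[{}_*A, A_*] &= [{}_*X \vee {}_*B,\, X_* \vee B_*] \\
&= [{}_*X, X_*] \vee [{}_*X, B_*] \vee [{}_*B, X_*] \vee [{}_*B, B_*].
\end{align*}
The first and last summands vanish because $X$ and $B$ are internal groupoids, and the middle two vanish by hypothesis (using the symmetry of the Huq commutator to rewrite $[{}_*X, B_*] = [B_*, {}_*X] = 0$). Hence $[{}_*A, A_*] = 0$, which is precisely the groupoid condition on the reflexive graph $A$.

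The calculation itself is quite direct; the only step requiring a little care is verifying that the $s$-kernel sends the given split extension of reflexive graphs to a split extension in $\C$, and that the resulting join in ${}_*A$ coincides with the join of ${}_*X$ and ${}_*B$ computed inside $A$. Both facts are routine consequences of protomodularity together with the naturality of the structure maps $s$ and $t$ with respect to morphisms of reflexive graphs, so I do not expect any serious obstacle in the proof.
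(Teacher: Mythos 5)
Your proof is correct and is essentially identical to the paper's: both decompose $A_* = X_* \vee B_*$ and ${}_*A = {}_*X \vee {}_*B$ via protomodularity and then distribute the Huq commutator over the joins, with the four resulting terms vanishing by the groupoid hypotheses on $X$ and $B$ and the stated commutator conditions. The extra care you take in justifying the join decomposition (via the $s$- and $t$-kernel split extensions) is the same observation the paper records at the start of the proof of Lemma \ref{lemma: commutator commutes with kernel}.
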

\begin{proof}
We have
\begin{align*}
[A_*,{}_*A] &= [X_* \vee B_*, {}_* X \vee {}_*B]\\
&= [X_*,{}_*X]\vee [X_*,{}_*B] \vee [B_*,{}_*X]\vee [B_*,{}_*B]\\
&=0.
\end{align*}
\end{proof}
As follows from Theorem 2.1 of \cite{GRAY_2020_b} if $\C$ admits centralizers,
$\I$ is a finite category, and $f:A\to C$ is a morphism in the functor category $\C^\I$, then
the centralizer of $f$ exists. In addition, it follows from the same theorem
that if $g:B\to C$ is the
centralizer of 
$f$ and  $X$ is an object in $\I$, then $B(X)$ together with the
morphism $g_x:B(X)\to C(X)$ can be constructed as follows:

For each
morphism $i:X\to Y$ in $\I$ let $w_i : W_i \to  C(X)$ be the \emph{preimage}
of the centralizer of $f_Y$ along $C(i)$ as displayed in the pullback
\[
\xymatrix{
W_i \ar[r]^{w_i}\ar[d]_{\tilde i} & C(X)\ar[d]^{C(i)}\\
Z_C(Y)(C(X),f_Y) \ar[r]_-{z_{f_Y}} & C(Y).
}
\]
The pair $(B(X),g_X)$ is then the product of the objects $(W_i,w_i)$ in
the comma category $(\C\downarrow C(X))$. Note that if $\I$ is the monoid (considered as a one object category) with identity
element $e$ and generated by $s$ and $t$ satisfying $st=t$ and $ts=s$, then $\C^\I$ is
essentially the category of reflexive graphs in $\C$. Applying the above mentioned construction to this 
special case we obtain:
\begin{lemma}
Let $\C$ be a semi-abelian category admitting centralizers. If $S$ is sub-reflexive-graph of $A$, then the centralizer of $S$ in $A$ has underlying object $Z\wedge s^{-1}(Z)\wedge t^{-1}(Z)$ where $Z$ is the centralizer of the underlying subobject inclusion of $S$ in $A$,  and $s^{-1}(Z)$ and $t^{-1}(Z)$ are the \emph{inverse images} of $Z$ along $s$ and $t$, respectively.
\end{lemma}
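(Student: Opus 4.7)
The plan is to apply directly the construction recalled immediately before the statement to the monoid $\I$, viewed as a one-object category, whose non-identity endomorphisms are $s$ and $t$ subject to $st=t$ and $ts=s$. A short manipulation using associativity and these relations gives $s^2 = s\cdot(ts) = (st)\cdot s = t\cdot s = s$ and symmetrically $t^2 = t$, so $\I$ has precisely the three endomorphisms $e$, $s$, $t$, and functors $\I\to\C$ are exactly reflexive graphs in $\C$. Since $\I$ is finite and $\C$ admits centralizers by hypothesis, Theorem 2.1 of \cite{GRAY_2020_b} applies.

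Setting $C = A$ (the codomain reflexive graph) and $f\colon S \to A$ the inclusion, $\I$ has a unique object $*$, so $C(*) = A$ and $f_* = f$. Its centralizer in $\C$ is $z_f\colon Z \to A$ with $Z$ as in the statement. For each of the three morphisms $i \in \{e,s,t\}$ of $\I$, the construction produces $W_i$ as the pullback of $z_f$ along $A(i)\colon A \to A$, which is exactly the inverse image $A(i)^{-1}(Z)$. This gives $W_e = Z$, $W_s = s^{-1}(Z)$ and $W_t = t^{-1}(Z)$.

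The cited theorem then identifies the underlying object $B(*)$ of the centralizer of $S$ in the category of reflexive graphs with the product, in the comma category $(\C\downarrow A)$, of the three pairs $(W_e,w_e)$, $(W_s,w_s)$ and $(W_t,w_t)$. Each $w_i$ is a monomorphism, being the pullback of the monomorphism $z_f$, and the product of monomorphisms in $(\C\downarrow A)$ is their wide pullback, i.e.\ their meet as subobjects of $A$. Hence the underlying object of the centralizer of $S$ in $A$ is $Z \wedge s^{-1}(Z) \wedge t^{-1}(Z)$, as required. The only delicate point is the preliminary bookkeeping for $\I$ (verifying that $s^2 = s$ and $t^2 = t$, so that $\I$ really has only three morphisms); everything else is a direct unwinding of the cited construction.
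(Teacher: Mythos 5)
Your proposal is correct and takes essentially the same route as the paper: the lemma is stated there as an immediate application of the construction from Theorem 2.1 of \cite{GRAY_2020_b} recalled just before it, and your argument simply (and correctly) unwinds that construction for the three-element monoid $\I$, including the useful explicit check that $s^2=s$ and $t^2=t$ and the observation that the product of the monomorphisms $w_e$, $w_s$, $w_t$ in $(\C\downarrow A)$ is the meet $Z\wedge s^{-1}(Z)\wedge t^{-1}(Z)$.
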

\begin{lemma}\label{lemma: description of faithful extensions of graphs}
Let $\C$ be a semi-abelian action accessible algebraically coherent category. 
A split extension
\[
\xymatrix{
X \ar[r]^{\kappa} & A \aru[r]^{\alpha} & B \aru[l]^{\beta}
}
\]
of internal graphs is faithful if and only if  $$Z_A(X,\kappa) \wedge s^{-1}(Z_A(X,\kappa)) \wedge t^{-1}(Z_A(X,\kappa))\wedge B = 0$$ in $\C$.
\end{lemma}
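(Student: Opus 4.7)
The plan is to apply the Bourn characterization of faithful split extensions (recalled in the last bullet of Section~\ref{prelims}) inside the category of internal reflexive graphs in $\C$, and then translate the resulting condition back to $\C$ via the preceding lemma.

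First I would observe that the category of internal reflexive graphs in $\C$ is itself a semi-abelian, action accessible, algebraically coherent category. Semi-abelianness is standard, and action accessibility together with algebraic coherence are inherited, for instance by viewing reflexive graphs as the $\I$-valued functors described just before the preceding lemma and invoking the pointwise construction of finite limits. Crucially, centralizers of monomorphisms in this category exist by Theorem 2.1 of \cite{GRAY_2020_b}. Consequently, the Bourn criterion applies in the category of internal reflexive graphs and yields that the displayed split extension is faithful if and only if $\tilde Z \wedge B = 0$ computed in that category, where $\tilde Z$ denotes the centralizer of $\kappa$ in the category of internal reflexive graphs.

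The preceding lemma then gives a concrete description of the underlying object of $\tilde Z$: it is $Z \wedge s^{-1}(Z) \wedge t^{-1}(Z)$, where $Z := Z_A(X,\kappa)$ is the centralizer taken in $\C$. Since the forgetful functor from internal reflexive graphs to $\C$ preserves (and creates) finite limits, the meet of two sub-reflexive-graphs of $A$ is computed at the level of underlying objects, and a sub-reflexive-graph vanishes precisely when its underlying object does. Rewriting $\tilde Z \wedge B = 0$ at the level of underlying objects therefore gives exactly the stated condition
\[ Z_A(X,\kappa) \wedge s^{-1}(Z_A(X,\kappa)) \wedge t^{-1}(Z_A(X,\kappa)) \wedge B = 0 \]
in $\C$.

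The main subtlety I anticipate is justifying that the Bourn faithfulness criterion is genuinely available in the category of internal reflexive graphs, that is, that enough of the good semi-abelian behaviour of $\C$ lifts to reflexive graphs in order to invoke it; once this inheritance is taken for granted, the proof is essentially an application of the preceding lemma combined with the pointwise nature of finite limits.
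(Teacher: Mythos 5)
Your proposal is correct and follows essentially the same route as the paper: both invoke the Bourn faithfulness criterion (the last bullet of Section~\ref{prelims}) in the category of internal reflexive graphs --- the paper justifying its applicability by citing Corollary 2.3 of \cite{GRAY_2020_b} for action accessibility of that category --- and then use the preceding lemma's description of the centralizer of a sub-reflexive-graph to rewrite the condition $\tilde Z \wedge B = 0$ at the level of underlying objects in $\C$. Your extra remarks about the pointwise computation of meets are implicit in the paper's one-line argument.
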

\begin{proof}
Since according to Corollary 2.3 of  \cite{GRAY_2020_b} the category of internal reflexive graphs in $\C$ is action accessible as soon as $\C$ is, the claim follows from the previous lemma via the last bullet of Section \ref{prelims}.
\end{proof}
\begin{lemma}\label{lemma: codomain of faithful extension groupoid}
Let $\C$ be a semi-abelian action accessible algebraically coherent category. 
For each faithful split extension
\[
\xymatrix{
X \ar[r]^{\kappa} & A \aru[r]^{\alpha} & B \aru[l]^{\beta}
}
\]
of internal reflexive graphs, $B$ is a groupoid if and only if $$[X,[B_*,{}_*B]]=0$$ in $\C$.
\end{lemma}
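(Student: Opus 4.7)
The statement is a biconditional, and the ``only if'' direction is immediate: if $B$ is a groupoid then $[B_*,{}_*B]=0$, whence $[X,[B_*,{}_*B]]=[X,0]=0$. A small bookkeeping point that I will use throughout: the commutator $[B_*,{}_*B]$ may be computed either in $B$ or in the ambient $A$, because in a pointed protomodular category the image of a cooperator $B_*\times{}_*B\to A$ equals the join $B_*\vee{}_*B\leq B$, so any cooperator landing in $A$ automatically factors through $B$.

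For the converse, the plan is to invoke Lemma \ref{lemma: description of faithful extensions of graphs}, which under the faithfulness hypothesis yields
\[Z_A(X,\kappa)\wedge s^{-1}(Z_A(X,\kappa))\wedge t^{-1}(Z_A(X,\kappa))\wedge B = 0,\]
and then to show that $[B_*,{}_*B]$, viewed as a subobject of $A$, lies in each of the four factors, forcing it to vanish. Containment in $B$ is trivial, and containment in $Z_A(X,\kappa)$ is a direct translation of the hypothesis $[X,[B_*,{}_*B]]=0$ via the universal property of the centralizer. For $s^{-1}(Z_A(X,\kappa))$ I would use the standard functoriality of the Huq commutator under direct images in the semi-abelian setting, namely $s([B_*,{}_*B])\leq[s(B_*),s({}_*B)]$; since ${}_*B=\ker(s|_B)$ one has $s({}_*B)=0$, hence $s([B_*,{}_*B])\leq[s(B_*),0]=0\leq Z_A(X,\kappa)$. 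A symmetric computation using $t(B_*)=0$ handles $t^{-1}(Z_A(X,\kappa))$. Combining the four containments with the faithfulness identity gives $[B_*,{}_*B]=0$, so $B$ is a groupoid.

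The main hurdle is largely notational: keeping straight whether commutators are being taken inside $A$ or inside $B$, and how the restrictions of $s,t\colon A\to A$ to $B$ interact with the reflexive-graph structure. The key structural input $f([A_1,A_2])\leq[f(A_1),f(A_2)]$, for $f\colon A\to A'$ and subobjects $A_1,A_2\leq A$, is classical and comes from composing $f$ with the quotient $A'\to A'/[f(A_1),f(A_2)]$ and observing that the resulting map sends $A_1$ and $A_2$ to commuting subobjects, so it necessarily kills $[A_1,A_2]$.
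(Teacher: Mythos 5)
Your proof is correct and follows essentially the same route as the paper's: the ``only if'' direction is dismissed as trivial, and the converse combines Lemma \ref{lemma: description of faithful extensions of graphs} with the observations that $[B_*,{}_*B]$ lies in $B$, in $Z_A(X,\kappa)$ (by the hypothesis), and in $s^{-1}(Z_A(X,\kappa))$ and $t^{-1}(Z_A(X,\kappa))$ because $s$ and $t$ kill it. The only difference is that you justify $s([B_*,{}_*B])=0$ via monotonicity of the commutator under images, where one could also simply note $[B_*,{}_*B]\leq B_*\wedge{}_*B\leq\ker(s)$; the paper leaves this step implicit.
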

\begin{proof}
If $B$ is a groupoid then this is trivially the case. The converse follows from Lemma \ref{lemma: description of faithful extensions of graphs} and the fact that $[B_*,{}_*B]$ is always in $s^{-1}(Z_A(X,\kappa))$ and $t^{-1}(Z_A(X,\kappa))$ (because $s([B_*,{}_*B])=0$ and similarly for $t$).
\end{proof}
\begin{theorem}\label{thm:min}
Let $\C$ be a semi-abelian action accessible algebraically coherent category. 
 For each faithful split extension
\[
\xymatrix{
X \ar[r]^{\kappa} & A \aru[r]^{\alpha} & B \aru[l]^{\beta}
}
\]
of reflexive graphs with $X$ a groupoid, there exists a largest sub-split-extension of groupoids with kernel $X$.
\end{theorem}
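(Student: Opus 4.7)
The plan is to show that the restriction of the given split extension along the sub-reflexive-graph $\tilde B \hookrightarrow B$ provided by Lemma~\ref{lemma: largest sub-reflexive-graph with partial composition} is the desired largest sub-split-extension of groupoids with kernel $X$. By ``restriction'' I mean pulling back the extension along $\tilde B \hookrightarrow B$; by the observation preceding Lemma~\ref{lift}, the resulting middle object is simply $X \vee \tilde B$ in $A$, and the induced diagram $X \to X \vee \tilde B \to \tilde B$ is once more a split extension of reflexive graphs with kernel $X$. The sub-split-extensions of the given extension with kernel $X$ thus correspond bijectively to sub-reflexive-graphs $B' \leq B$, and maximality will reduce to maximality among such $B'$.

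First I would verify that faithfulness is inherited by every such restriction: given two parallel morphisms in $K^{-1}(X)$ into $X \to X \vee B' \to B'$, post-composition with the componentwise-mono inclusion into the given faithful extension yields two equal morphisms, forcing the original two to agree. Granted this, I can assemble the earlier lemmas. Since $X$ is a groupoid and $[{}_*\tilde B,X_*]=0=[\tilde B_*,{}_*X]$ by construction of $\tilde B$, Lemma~\ref{lemma: commutator commutes with kernel} applied to $X \to X \vee \tilde B \to \tilde B$ yields $[X,[(X\vee\tilde B)_*,{}_*(X\vee\tilde B)]]=0$, whence $[X,[\tilde B_*,{}_*\tilde B]]=0$ by monotonicity of the commutator. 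Lemma~\ref{lemma: codomain of faithful extension groupoid}, applied to the (now-faithful) restriction, then promotes $\tilde B$ to a groupoid, and Lemma~\ref{lemma: groupoids are extension closed} makes $X \vee \tilde B$ a groupoid. So the restriction to $\tilde B$ is indeed a sub-split-extension of groupoids with kernel $X$.

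For maximality, consider any sub-split-extension of groupoids $X \to A' \to B'$ with kernel $X$. By protomodularity, $A'=X\vee B'$ in $A$, and the assumption that $A'$ is a groupoid amounts to $[A'_*,{}_*A']=0$. Expanding $A'_*=X_*\vee B'_*$ and ${}_*A'={}_*X\vee{}_*B'$, and invoking the distributivity of Huq commutators over joins recalled in Section~\ref{prelims}, the summands $[{}_*B',X_*]$ and $[B'_*,{}_*X]$ must both vanish. The universal property of $\tilde B$ then delivers $B'\leq\tilde B$, whence $A'=X\vee B'\leq X\vee\tilde B$, as required.

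The principal technical point I anticipate is precisely the descent of faithfulness to restrictions, since it is needed to invoke Lemma~\ref{lemma: codomain of faithful extension groupoid} on the sub-extension; once this is in place, the remainder is a clean combination of Lemmas~\ref{lemma: largest sub-reflexive-graph with partial composition}, \ref{lemma: commutator commutes with kernel}, \ref{lemma: groupoids are extension closed}, and \ref{lemma: codomain of faithful extension groupoid} with the distributivity and monotonicity of the Huq commutator.
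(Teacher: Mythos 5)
Your proposal is correct and follows essentially the same route as the paper: pull back along the $\tilde B$ of Lemma~\ref{lemma: largest sub-reflexive-graph with partial composition}, then combine Lemmas~\ref{lemma: commutator commutes with kernel}, \ref{lemma: codomain of faithful extension groupoid} and \ref{lemma: groupoids are extension closed}. You simply spell out two points the paper leaves implicit (that faithfulness descends to sub-split-extensions, and the maximality argument via distributivity of the commutator over joins), and both are handled correctly.
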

\begin{proof}
By Lemma \ref{lemma: largest sub-reflexive-graph with partial composition} there is a largest sub-reflexive-graph $\tilde B$ of $B$ such that $[{}_*\tilde B,X_*]=0=[\tilde B_*,{}_*X]$. We will prove that the split extension at the top of the diagram
\[
\xymatrix{
X\ar@{=}[d] \ar[r]^{\tilde \kappa} & \tilde A \ar[d] \aru[r]^{\tilde \alpha} & \tilde B \ar[d] \aru[l]^{\tilde \beta}\\
X \ar[r]^{\kappa} & A \aru[r]^{\alpha} & B \aru[l]^{\beta}
}
\]
obtained by pulling back along $\tilde B \to B$ is the desired split extensions of groupoids. According to Lemma \ref{lemma: commutator commutes with kernel} $[X,[\tilde A_*,{}_*\tilde A]] =0$ which means that $[X,[\tilde B_*,{}_*\tilde B]]=0$. Therefore, since a sub-split-extension of a faithful extension is faithful, it follows from Lemma \ref{lemma: codomain of faithful extension groupoid} that $\tilde B$ is a groupoid. The final claim then follows from Lemma \ref{lemma: groupoids are extension closed}.
\end{proof}

We will also need the following proposition which shows that a coreflective
subcategory closed under certain limits admits generic split extensions
whenever the category it is coreflective in does. Recall that a functor between pointed categories is \emph{protoadditive} \cite{EG} when it preserves split exact sequences.
 \begin{proposition}
Let $\X$ and $\D$ be semi-abelian categories, and let \\ $I : \X\to \D$ be a full and faithful
protoadditive functor with right adjoint $R\colon \D \to \X$. If $\D$ has generic split
extensions, then so does $\X$.
\end{proposition}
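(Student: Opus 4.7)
The plan is to construct the generic split extension in $\X$ with kernel $X$ by applying the coreflector $R$ to the generic split extension in $\D$ with kernel $IX$. Starting from the generic split extension
\[
\xymatrix{IX\ar[r]^-{k} & [IX]\ltimes IX\aru[r]^-{p_1} & [IX]\aru[l]^-{i}}
\]
in $\D$, I would apply $R$. Since $R$ is a right adjoint it preserves kernels, and $R(p_1\circ i)=1$, so the result is again a split extension, now in $\X$. Composing with the unit isomorphism $\eta_X\colon X\to RIX$ (invertible because $I$ is fully faithful and $R$ is its right adjoint) yields a split extension $\mathcal{E}$ in $\X$ with kernel $X$ of the form
\[
\xymatrix{X\ar[r]^-{\tilde k} & R([IX]\ltimes IX)\aru[r]^-{R(p_1)} & R([IX]),\aru[l]^-{R(i)}}
\]
where $\tilde k = R(k)\circ\eta_X$.

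To verify that $\mathcal{E}$ is generic, I would take an arbitrary split extension with kernel $X$ in $\X$, with kernel map $\kappa\colon X\to A$, projection $\alpha\colon A\to B$ and section $\beta\colon B\to A$, and apply $I$ to it. Protoadditivity of $I$ ensures that the image is again a split extension in $\D$, with kernel $IX$. Genericity in $\D$ then yields a unique pair $(v_0,w_0)$, with $v_0\colon IA\to [IX]\ltimes IX$ and $w_0\colon IB\to [IX]$, satisfying the three standard compatibilities together with the identity-on-kernels condition $v_0\circ I\kappa = k$. Transporting $(v_0,w_0)$ through the hom-set bijection of the adjunction $I\dashv R$ produces morphisms $v\colon A\to R([IX]\ltimes IX)$ and $w\colon B\to R([IX])$ in $\X$, and naturality of this bijection converts each of the three $\D$-equations into the corresponding $\X$-equation for a morphism of split extensions $(1_X,v,w)$ from the given extension to $\mathcal{E}$; in particular the kernel condition becomes $v\circ\kappa = R(k)\circ\eta_X = \tilde k$, via the naturality square for $\eta$ at $\kappa$.

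The one point requiring care is confirming that the ``identity on kernels'' condition transports correctly, so that morphisms in $\X$ from the given extension to $\mathcal{E}$ with identity on $X$ correspond bijectively to morphisms in $\D$ from $I$ of that extension to the generic one with identity on $IX$. This is settled by the naturality observation just sketched, and I expect no deeper obstacle: the uniqueness of $(1_X,v,w)$ in $\X$ follows immediately from the uniqueness of $(v_0,w_0)$ in $\D$ together with the bijectivity of the adjunction hom-set correspondence, so $\mathcal{E}$ is the generic split extension in $\X$ with kernel $X$.
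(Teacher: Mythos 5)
Your proposal is correct and follows essentially the same route as the paper: apply $R$ to the generic split extension with kernel $I(X)$, precompose the kernel map with the unit isomorphism $\eta_X$, and use protoadditivity of $I$ together with the adjunction bijection (and its naturality) to match morphisms of split extensions on the two sides. The paper's proof is just a more compressed statement of the same argument.
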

\begin{proof}
 Suppose $X$ is an object in $\X$ and suppose that
 \[
  \xymatrix{
   I(X) \ar[r]^-{k} & [I(X)]\ltimes I(X) \ar@<0.5ex>[r]^-{p_1} & [I(X)] \ar@<0.5ex>[l]^-{i}
  }
 \]
 is the generic split extension with kernel $I(X)$ in $\D$. Let $\eta$ be
 the unit of the adjunction $I \dashv R$ which is an isomorphism. The claim
 now follows by observing that (i) the lower part of \eqref{coref_2} (below) is a
 split extension; (ii) for each split extension \eqref{spl-ext} the upper part
 of \eqref{coref_1} (below) is a split extension, and the
 adjunction produces a bijection between morphisms of split extensions of
 the form
 \begin{equation}
  \label{coref_1}
  \vcenter{
  \xymatrix{
   I(X)\ar@{=}[d]\ar[r]^{I(\kappa)} & I(A)\ar@{-->}[d] \ar@<0.5ex>[r]^{I(\alpha)} & I(B)\ar@<0.5ex>[l]^{I(\beta)}\ar@{-->}[d]\\
   I(X) \ar[r]^-{k} & [I(X)]\ltimes I(X) \ar@<0.5ex>[r]^-{p_1} & [I(X)] \ar@<0.5ex>[l]^-{i}
  }}
 \end{equation}
 in $\D$ 
 and morphisms of split extensions of the form
 \begin{equation}
  \label{coref_2}
  \vcenter{
  \xymatrix@=30pt{
   X\ar@{=}[d]\ar[r]^{\kappa} & A\ar@{-->}[d] \ar@<0.5ex>[r]^{\alpha} & B\ar@<0.5ex>[l]^{\beta}\ar@{-->}[d]\\
   X \ar[r]^-{R(k)\eta_X} & R([I(X)]\ltimes I(X)) \ar@<0.5ex>[r]^-{R(p_1)} & R([I(X)]) \ar@<0.5ex>[l]^-{R(i)}
  }
 }
 \end{equation}
 in $\X$.
\end{proof}

  \begin{theorem}\label{thm:main}
  A category $\C$ is a semi-abelian action representable
 algebraically coherent category with normalizers if and only if the category
 $\mathsf{Grpd} ({ \C})$ of internal groupoids in $\C$ is a semi-abelian
 action representable algebraically coherent category with normalizers.
\end{theorem}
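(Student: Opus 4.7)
The plan is to prove the two implications separately. The converse direction (``$\mathsf{Grpd}(\C)$ good implies $\C$ good'') I obtain from the Proposition above, applied to the discrete-groupoid functor $I\colon\C\to\mathsf{Grpd}(\C)$, $X\mapsto(X,1_X,1_X)$: this is fully faithful and protoadditive (both being immediate from the componentwise definition), and has right adjoint $R\colon(G,s,t)\mapsto G_0$ given by the equalizer of $s$ with $1_G$, under which a morphism $f\colon X\to G$ satisfying $sf=f=tf$ corresponds to a morphism $X\to G_0$. The Proposition thus transfers action representability from $\mathsf{Grpd}(\C)$ to $\C$; the remaining three properties (semi-abelianness, algebraic coherence, normalizers) transfer because $\C$ is identified, via $I$, with a full coreflective subcategory of $\mathsf{Grpd}(\C)$ closed under finite limits, since discrete groupoids trivially satisfy $[{}_*G,G_*]=0$.

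For the direct implication, I would factor through the category $\C^\I$ of internal reflexive graphs in $\C$. Using Theorem~2.1 of \cite{GRAY_2020_b}, together with standard arguments for functor categories over a finite base, one first checks that $\C^\I$ inherits all four properties. Given $X\in\mathsf{Grpd}(\C)$, form the generic split extension
\[
\xymatrix{
X \ar[r]^-{k} & [X]\ltimes X \aru[r]^-{p_1} & [X] \aru[l]^-{i}
}
\]
in $\C^\I$. This extension is automatically faithful (a generic extension, being terminal in its $K$-fiber, admits at most one morphism from each object), and $X$ is a groupoid by hypothesis, so Theorem \ref{thm:min} produces a largest sub-split-extension of groupoids; this is my candidate for the generic split extension of $X$ in $\mathsf{Grpd}(\C)$.

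To verify the universal property, let $X\to A\to B$ be any split extension of groupoids with kernel $X$. Viewed in $\C^\I$, it admits a unique morphism (identity on $X$) to $X\to[X]\ltimes X\to[X]$. Factoring this morphism through its regular-epi/mono decomposition in $\C^\I$ produces a sub-split-extension $X\to A'\to B'$ of $X\to[X]\ltimes X\to[X]$, with $A'$ and $B'$ regular-epi quotients of $A$ and $B$ and hence still groupoids. One then derives $[{}_*B',X_*]\leq[A'_*,{}_*A']=0$ and dually $[B'_*,{}_*X]=0$, forcing $B'\leq\tilde B$ by the maximality property in Lemma \ref{lemma: largest sub-reflexive-graph with partial composition}; therefore the morphism factors through the sub-extension of Theorem \ref{thm:min}, and uniqueness descends from $\C^\I$. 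Semi-abelianness of $\mathsf{Grpd}(\C)$ is the classical result \cite{BG2}, while algebraic coherence and existence of normalizers descend from $\C^\I$ via the preservation of the commutator condition \eqref{commut} under the relevant limit constructions. The principal technical obstacle is the regular-epi step: one must verify carefully that the groupoid identity $[\ker(s),\ker(t)]=0$ is preserved by regular epis in $\C^\I$, which depends on the short five lemma and on the algebraic coherence hypothesis on $\C$.
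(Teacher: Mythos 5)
Your proof is correct and follows essentially the same route as the paper: the discrete-groupoid coreflection together with the Proposition for one direction, and Theorem~\ref{thm:min} applied to the generic split extension of reflexive graphs for the other, with semi-abelianness, algebraic coherence and normalizers imported from the cited results. Your regular-epi/mono factorization argument usefully fills in the paper's ``it easily follows'' step (though note that preservation of the groupoid condition $[\ker(s),\ker(t)]=0$ under regular epimorphisms needs only semi-abelianness --- Huq commutators of normal subobjects are preserved by regular images --- not algebraic coherence).
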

\begin{proof}
For the ``if'' part suppose that $\mathsf{Grpd}(\C)$ is a
 semi-abelian algebraically coherent category with normalizers. 
 Noting that functor from $\C$ to
 $\mathsf{Grpd}({\C})$, sending an object in $\C$ to the discrete groupoid in
 $\mathsf{Grpd}(\C)$ embeds $\C$ as a full reflective and coreflective subcategory
 of $\mathsf{Gpd}(\C)$ (closed in $\mathsf{Grpd}(\C)$ under quotients and subobjects), it follows that $\C$ is a semi-abelian
 algebraically coherent category with normalizers. Action representability
 now follows from the previous proposition.
 For the ``only if'' part suppose that $\C$ is a semi-abelian action representable algebraically
 coherent category with normalizers.
 It follows from \cite{Gray0} that the category of reflexive graphs being a functor category is action representable. For a groupoid $X$, applying the previous theorem to the generic split extension of reflexive graphs with kernel $X$ it easily follows that the largest sub-split extension of groupoids with kernel $X$ is the generic split extension of groupoids with kernel $X$. The fact that $\mathsf{Grpd} ({\C})$ is semi-abelian when $\C$ is semi-abelian follows from Lemma $4.1$ in \cite{BG2}. Proposition 4.18 of \cite{Coherent} now tells us that $\mathsf{Grpd }(\C)$ is algebraically coherent, and hence it remains to show that $\mathsf{Grpd}(\C)$ has normalizers.
 However, by Corollary 2.3 of \cite{GRAY_2020_b}
 the category $\mathsf{RG}(\C)$ of reflexive graphs in $\C$ has normalizers and hence so does
 $\mathsf{Grpd} (\C)$
 being closed under subobjects and finite limits in $\mathsf{RG}(\C)$.
\end{proof}

\begin{remark} Note that the ``only if'' part of the above theorem is known
in the special case when $\C$ is the category of groups. This goes back to the work of
K. Norrie \cite{Norrie} 
whose \emph{actors} of crossed modules of groups
are essentially the same, as shown by P. Ramasu in \cite{Ramasu},
as split extension classifiers in the category of internal groupoids in the
category of groups. Let us also mention that D. Bourn has defined action
groupoids whose existence are equivalent to the existence of generic split
extensions in the pointed protomodular context, and has shown that each
category of groupoids with ``fixed object of objects'' admits action groupoids
\cite{Bourn1}.
\end{remark}

Recall that, for a non-negative integer $n$, the category $\mathsf{Grpd}^{n}(\C) $ of $n$-fold internal groupoids
can be thought of as the category of internal groupoids in $\mathsf{Grpd}^{n-1}(\C)$ when $n>0$, and be identified with 
$\C$ when $n=0$. As an immediate corollary of the previous theorem we obtain:
\begin{corollary}\label{cor:main}
 If $\C$ is a semi-abelian action representable algebraically coherent category with normalizers, then so is the category $\mathsf{Grpd}^n ({\mathbb C})$ of $n$-fold internal groupoids in $\C$.
\end{corollary}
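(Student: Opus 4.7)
The plan is to prove the corollary by a straightforward induction on $n$, using Theorem \ref{thm:main} as the inductive step. Since the definition of $\mathsf{Grpd}^n(\C)$ given just before the corollary is itself recursive --- with $\mathsf{Grpd}^0(\C) = \C$ and $\mathsf{Grpd}^n(\C) = \mathsf{Grpd}(\mathsf{Grpd}^{n-1}(\C))$ for $n>0$ --- an inductive proof matches the structure of the definition exactly.

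For the base case $n=0$, the category $\mathsf{Grpd}^0(\C) = \C$ is, by hypothesis, a semi-abelian action representable algebraically coherent category with normalizers, so there is nothing to show. For the inductive step, suppose that the result holds for some $n-1 \geq 0$, so that $\mathsf{Grpd}^{n-1}(\C)$ is a semi-abelian action representable algebraically coherent category with normalizers. Then Theorem \ref{thm:main}, applied with the base category $\mathsf{Grpd}^{n-1}(\C)$ in the role of $\C$, yields that
\[
\mathsf{Grpd}\bigl(\mathsf{Grpd}^{n-1}(\C)\bigr) = \mathsf{Grpd}^n(\C)
\]
is again a semi-abelian action representable algebraically coherent category with normalizers, which is exactly the inductive claim.

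There is no real obstacle here: the entire argument reduces to noting that the class of \emph{semi-abelian action representable algebraically coherent categories with normalizers} is closed under the operation $\mathbb{D} \mapsto \mathsf{Grpd}(\mathbb{D})$, which is precisely what Theorem \ref{thm:main} asserts. The only point worth flagging is that Theorem \ref{thm:main} must be applied with its base-category hypothesis instantiated at $\mathsf{Grpd}^{n-1}(\C)$ rather than at $\C$; this is legitimate because the theorem is stated for an arbitrary $\C$ satisfying the listed properties, and these properties propagate through the induction. Thus iterating the theorem $n$ times delivers the corollary.
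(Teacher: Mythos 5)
Your proof is correct and is precisely the argument the paper intends: the paper presents the corollary as an immediate consequence of Theorem \ref{thm:main}, obtained by iterating that theorem along the recursive definition $\mathsf{Grpd}^n(\C)=\mathsf{Grpd}(\mathsf{Grpd}^{n-1}(\C))$. Your explicit induction simply spells this out, with the base case and the instantiation of the theorem at $\mathsf{Grpd}^{n-1}(\C)$ handled correctly.
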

\begin{examples}
\emph{The results in this article apply to some important algebraic categories, such as the categories $\mathsf{Grpd}(\mathsf{Grp})$, $\mathsf{Grpd}(\mathsf{{Lie}_R)}$, or $\mathsf{Grpd}(\mathsf{{Hopf}_{K,coc})}$, of internal groupoids in the categories of groups, Lie algebras over a commutative ring $R$, or cocommutative Hopf algebras over a field $K$, respectively. For the fact that $\mathsf{{Hopf}_{K,coc}}$ is an action representable semi-abelian category the reader is referred to \cite{Hopf}, whereas the fact that it is algebraically coherent is explained in Example 4.6 in \cite{Coherent}.  To see that $\mathsf{{Hopf}_{K,coc}}$ has normalizers recall that:
\begin{enumerate}[(a)]
\item $\mathsf{{Hopf}_{K,coc}}$ is equivalent to the category  $\mathsf{Grp}(\mathsf{Coalg}_{K,coc})$ of internal groups in the finitely complete cartesian closed category $\mathsf{Coalg}_{K,coc}$ of cocommutative coalgebras over $K$;
\item  for a cartesian closed category $\X$ with finite limits: (i) $\mathsf{Grp}(\X^\two)\cong(\mathsf{Grp}(\X))^\two$, (ii) $\X^\two$ is cartesian closed;
\item  the category of internal groups in a cartesian closed category with finite limits is action representable as soon as it is semi-abelian \cite{BJK};
\item  a semi-abelian category is action representable and admits normalizers if and only if its category of morphisms is action representable \cite{Gray0}.
\end{enumerate}
 Using the equivalence between the categories of internal groupoids and (internal) crossed modules \cite{Janelidze}, it follows that the categories of crossed modules of groups, $n$-cat groups \cite{Loday}, crossed modules of Lie algebras, crossed $n$-cubes of Lie algebras \cite{Ellis}, and ${\mathsf{cat}}^1$-cocommutative Hopf algebras \cite{Vilaboa, Hopf} are all algebraically coherent action representable semi-abelian categories with normalizers.}
\end{examples}

\bibliographystyle{plain}

\end{document}